\documentclass{amsart}
\usepackage[all]{xy}
\usepackage{color}
\usepackage{amsthm}
\usepackage{amssymb}
\usepackage[colorlinks=true]{hyperref}

\setcounter{equation}{0}

\numberwithin{equation}{section}

\newtheorem{theorem}[equation]{Theorem}
\newtheorem*{theorem*}{Theorem}

\newtheorem{proposition}[equation]{Proposition}

\newtheorem*{corollary*}{Corollary}

\theoremstyle{remark}
\newtheorem{definition}[equation]{Definition}

\newtheorem{example}[equation]{Example}

\theoremstyle{remark}
\newtheorem{remark}[equation]{Remark}

\setcounter{tocdepth}{1}

\newcommand{\cA}{{\mathcal A}}
\newcommand{\cB}{{\mathcal B}}
\newcommand{\cC}{{\mathcal C}}
\newcommand{\cD}{{\mathcal D}}

\newcommand{\cM}{{\mathcal M}}

\newcommand{\cO}{{\mathcal O}}

\newcommand{\cU}{{\mathcal U}}



\newcommand{\bbL}{\mathbb{L}}

\newcommand{\bbZ}{\mathbb{Z}}



\DeclareMathOperator{\Mot}{Mot}
\DeclareMathOperator{\NChow}{NChow} 

\DeclareMathOperator{\Mix}{KMM} 
\DeclareMathOperator{\KPM}{KPM} 
\DeclareMathOperator{\KTM}{KTM} 


\newcommand{\Sp}{\mathsf{Spt}} 

\newcommand{\dgcat}{\mathsf{dgcat}}

\newcommand{\dg}{\mathsf{dg}}

\newcommand{\Hom}{\mathrm{Hom}}

\newcommand{\rep}{\mathrm{rep}}

\newcommand{\Ho}{\mathsf{Ho}}
\newcommand{\HO}{\mathsf{HO}}

\newcommand{\Hmo}{\mathsf{Hmo}}
\newcommand{\op}{\mathsf{op}}

\newcommand{\too}{\longrightarrow}

\newcommand{\ie}{\textsl{i.e.}\ }
\newcommand{\eg}{\textsl{e.g.}}
\newcommand{\etc}{\textsl{etc.}}

\newcommand{\Madd}{\Mot^{\mathsf{add}}_\dg} 

\newcommand{\Mloc}{\Mot^{\mathsf{loc}}_\dg} 

\begin{document}

\title[Weight structure on noncommutative motives]{Weight structure on noncommutative motives}

\author{Gon{\c c}alo~Tabuada}

\address{Gon{\c c}alo Tabuada, Department of Mathematics, MIT, Cambridge, MA 02139, USA}
\email{tabuada@math.mit.edu}
\urladdr{http://math.mit.edu/~tabuada}

\keywords{Weight structure, weight spectral sequences, noncommutative motives}

\subjclass[2000]{14A22, 18D20, 18G40, 19L10}
\date{\today}

\thanks{The author was partially supported by the J.H. and E.V. Wade award. He is very grateful to Paul Balmer, Yuri Manin and Greg Stevenson for useful conversations.}


\abstract{In this note we endow Kontsevich's category $\Mix_k$ of noncommutative mixed motives with a non-degenerate weight structure in the sense of Bondarko. As an application we obtain a convergent weight spectral sequence for every additive invariant (\eg~algebraic $K$-theory, cyclic homology, topological Hochschild homology, \etc), and a ring isomorphism between $K_0(\Mix_k)$ and the Grothendieck ring of the category of noncommutative Chow motives.
}}
\maketitle
\vskip-\baselineskip
\vskip-\baselineskip
\section{Weight structure}
In his seminal talk~\cite{IAS}, Kontsevich introduced the triangulated category $\Mix_k$ of noncommutative mixed motives (over a base commutative ring $k$) and conjectured the existence of a ``different'' $t$-structure on this category. In this note we formalize Kontsevich's beautiful insight and illustrate some of its important consequences. Recall from \cite{IMRN,CvsNC} the construction of the additive category $\NChow_k$ of noncommutative Chow motives. Our formalization of the ``different'' $t$-structure is the following:
\begin{theorem}\label{thm:main}
There exist two full subcategories $\Mix_k^{w\geq 0}$ and $\Mix_k^{w\leq 0}$ of $\Mix_k$ verifying the following seven conditions:
\begin{itemize}
\item[(i)] $\Mix_k^{w\geq 0}$ and $\Mix_k^{w\leq 0}$ are additive and idempotent complete;
\item[(ii)] $\Mix_k^{w\geq 0} \subset \Mix_k^{w\geq 0}[1]$ and $\Mix_k^{w\leq 0}[1] \subset \Mix_k^{w\leq 0}$;
\item[(iii)] For every $M \in \Mix_k^{w \geq 0}$ and $N \in \Mix_k^{w \leq0}[1]$ we have $\Hom_{\Mix_k}(M,N)=~0$;
\item[(iv)] For every $M \in \Mix_k$ there is a distinguished triangle
$$ N_2[-1] \too M \too N_1 \too N_2$$
with $N_1 \in \Mix_k^{w \leq 0}$ and $N_2 \in \Mix_k^{w \geq 0}$;
\item[(v)] $\Mix_k = \cup_{l \in \bbZ} \Mix_k^{w\geq 0}[-l] = \cup_{l \in \bbZ} \Mix_k^{w\leq 0}[-l]$; 
\item[(vi)] There is a natural equivalence of categories $\NChow_k \simeq \Mix_k^{w \geq 0} \cap \Mix_k^{w \leq 0}$;
\item[(vii)] $\cap_{l \in \bbZ} \Mix_k^{w \geq 0}[-l] = \cap_{l\in \bbZ} \Mix_k^{w \leq 0} [-l] =\{0\}$.
\end{itemize} 
\end{theorem}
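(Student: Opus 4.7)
The plan is to invoke Bondarko's general procedure for constructing weight structures from ``negative'' sets of generators. The key input is the following: if $\cT$ is an idempotent complete triangulated category equal to the thick closure of a set $S$ of objects satisfying $\Hom_\cT(X,Y[i])=0$ for all $X,Y\in S$ and all $i\geq 1$, then $\cT$ admits a bounded weight structure whose heart is the Karoubian envelope of the additive hull of $S$ (Bondarko; see also Modoi-Sosna and Pauksztello for variants). I propose to apply this result with $\cT=\Mix_k$ and $S=\NChow_k$ viewed inside $\Mix_k$ via the natural embedding.

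The first input, \emph{generation}, is essentially automatic: by Kontsevich's construction, $\Mix_k$ is the idempotent complete triangulated category generated by the noncommutative motives $U(A)$ of smooth proper dg categories $A$, and these generators already lie in $\NChow_k$, so the thick closure of $\NChow_k$ inside $\Mix_k$ is $\Mix_k$ itself.

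The second and more delicate input is \emph{negativity}. By the universal property of $\Mix_k$, the mapping spectrum between $U(A)$ and $U(B)$ computes the non-connective algebraic $K$-theory spectrum of $A^{\op}\otimes B$, and hence
\[
\Hom_{\Mix_k}\bigl(U(A),\, U(B)[i]\bigr) \;\cong\; K_{-i}(A^{\op} \otimes B).
\]
Since $A^{\op}\otimes B$ is again smooth proper, Schlichting's vanishing theorem for negative $K$-theory (in its dg form due to Tabuada-Van den Bergh) yields $K_{-i}(A^{\op}\otimes B)=0$ for $i\geq 1$. This is exactly the required negativity condition, and it is the main obstacle of the proof; every subsequent step is formal.

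With both inputs in hand, Bondarko's theorem produces subcategories $\Mix_k^{w\geq 0}$ and $\Mix_k^{w\leq 0}$ that automatically verify conditions (i)--(v). Condition (vi) follows because $\NChow_k$ is by construction additive and idempotent complete, hence already equals the Karoubian envelope of its additive hull inside $\Mix_k$. Finally (vii) is a formal consequence of (v) and (iii): for any $X\in\cap_l \Mix_k^{w\geq 0}[-l]$, boundedness produces an integer $n$ with $X\in\Mix_k^{w\leq 0}[-n]$, while the intersection hypothesis gives $X\in\Mix_k^{w\geq 0}[-n-1]$; shifting by $[n+1]$ and applying (iii) forces $\id_X=0$, whence $X=0$, and the other intersection is handled symmetrically.
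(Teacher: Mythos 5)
Your proposal is correct and follows the same overall strategy as the paper: exhibit (the image of) $\NChow_k$ as a negative generating family inside $\Mix_k$ and invoke Bondarko's construction of a bounded weight structure with prescribed heart. Two points of genuine divergence are worth flagging.

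First, the negativity step. You assert that the mapping spectra in $\Mix_k$ compute \emph{non-connective} algebraic $K$-theory and then appeal to the vanishing of negative $K$-theory for smooth proper dg categories. The paper instead records explicitly (with a dedicated footnote) that Kontsevich's original construction of $\Mix_k$ is built from the \emph{connective} $K$-theory spectra $K(\cA^\op\otimes^\bbL\cB)$, so that $\Hom_{\Mix_k}(\Phi(\cA),\Phi(\cB)[i])=0$ for $i>0$ holds by construction, with no external vanishing theorem needed; compare the computation \eqref{eq:computation}. Your route is not wrong --- indeed the vanishing result you cite is precisely why the connective and non-connective constructions of $\Mix_k$ agree --- but the paper's choice of model makes this key orthogonality free of charge, and if one wants to be fully rigorous about the non-connective picture one must also justify that the mapping spectra really are non-connective $K$-theory of the tensor product. (The paper also runs Bondarko's generating criterion first on the non-idempotent-complete $\KTM_k$ and only then extends to the Karoubi envelope via \cite[Prop.~5.2.2]{Bondarko}, whereas you invoke the Karoubian variant directly; this is a harmless difference.)

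Second, condition (vii). You deduce non-degeneracy formally from boundedness (v) together with orthogonality (iii): pick $n$ with $X\in\Mix_k^{w\leq 0}[-n]$, use $X\in\Mix_k^{w\geq 0}[-n-1]$, shift and apply (iii) to kill $\id_{X[n+1]}$. This is both correct and noticeably shorter than the paper's treatment, which proves two auxiliary statements (Propositions~\ref{prop:aux1} and~\ref{prop:aux2}) --- a uniform vanishing bound $\Hom_{\Mix_k}(N,M[i])=0$ for $i\gg 0$, $N$ in the heart, obtained by a finite generation-by-extensions argument in $\Madd(e)$, together with the dual detection statement --- and then uses these to kill the two intersections. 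Your argument recovers the well-known fact that bounded weight structures are automatically non-degenerate and makes the generation-by-extensions bookkeeping unnecessary.
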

Items (i)-(iv) assert that the triangulated category $\Mix_k$ is endowed with a weight structure $w$ (also known in the literature as a co-$t$-structure) in the sense of Bondarko~\cite[Def.~1.1.1]{Bondarko}. Item (v) asserts that $w$ is bounded, item (vi) that the heart of $w$ can be identified with the category of noncommutative Chow motives, and item (vii) that $w$ is non-degenerate. Theorem~\ref{thm:main} should then be regarded as the noncommutative analogue of the Chow weight structure on Voevodsky's triangulated category of motives; consult \cite[\S6.5-6.6]{Bondarko}.
\section{Weight spectral sequences}
Let $\dgcat_k$ be the category of (small) dg categories over a fixed base commutative ring $k$; consult Keller's ICM address \cite{ICM}.
\begin{definition}\label{def:additive}
Let $L(-):\dgcat_k \to \cM$  be a functor with values in a symmetric monoidal stable model category; see \cite[\S4 and \S7]{Hovey}. We say that $L$ is an {\em additive invariant} if it verifies the following three conditions:
\begin{itemize}
\item[(i)] filtered colimits are mapped to filtered colimits;
\item[(ii)] {\em derived Morita equivalences} (\ie dg functors which induce an equivalence on the associated derived categories; see \cite[\S4.6]{ICM}) are mapped to weak equivalences;
\item[(iii)] {\em split exact sequences} (\ie sequences of dg categories which become split exact after passage to the associated derived categories; see \cite[\S13]{Duke}) are mapped to direct sums
\begin{eqnarray*}
\xymatrix@C=1.5em@R=1.0em{
0 \ar[r] &  \cA \ar[r]  & \cB \ar[r]  \ar@/_0.5pc/[l] & \cC \ar[r] \ar@/_0.5pc/[l] &  0
} &\mapsto&
L(\cA) \oplus L(\cC) \simeq L(\cB)
\end{eqnarray*}
in the homotopy category $\Ho(\cM)$.
\end{itemize}
\end{definition}
\begin{proposition}\label{prop:aux}
Every additive invariant $L(-)$ gives rise to a triangulated functor $L(-): \Mix_k \too \Ho(\cM)$; which we still denote by $L(-)$.
\end{proposition}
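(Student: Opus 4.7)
The plan is to invoke the universal property of the category of additive motives $\Madd$ established in the author's earlier work; see~\cite{Duke}. Recall that $\Madd$ is a symmetric monoidal stable model category equipped with a canonical functor $\Uadd:\dgcat_k \to \Madd$ that preserves filtered homotopy colimits, inverts derived Morita equivalences, and sends split exact sequences of dg categories to direct sums in $\Ho(\Madd)$. Its defining universal property asserts that for every stable model category $\cD$ and every functor $E(-):\dgcat_k \to \cD$ satisfying the three conditions of Definition~\ref{def:additive}, there exists an essentially unique homotopy-colimit-preserving functor $\overline{E}:\Madd \to \cD$ together with a natural weak equivalence $\overline{E} \circ \Uadd \simeq E$.

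Applied to the additive invariant $L(-):\dgcat_k \to \cM$, this universal property yields a homotopy-colimit-preserving functor $\overline{L}:\Madd \to \cM$ lifting $L$. Since both $\Madd$ and $\cM$ are stable model categories, the induced functor on homotopy categories $\Ho(\overline{L}):\Ho(\Madd) \too \Ho(\cM)$ is automatically triangulated, because it commutes with the suspension functor and takes the cofiber sequences coming from homotopy pushout squares to cofiber sequences.

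The last step is to restrict along the inclusion of $\Mix_k$. Following Kontsevich~\cite{IAS} (see also~\cite{CvsNC}), the category $\Mix_k$ of noncommutative mixed motives is defined as the thick triangulated subcategory of $\Ho(\Madd)$ generated by the objects $\Uadd(\cA)$ associated to smooth and proper dg categories~$\cA$; in particular the canonical inclusion $\Mix_k \hookrightarrow \Ho(\Madd)$ is fully faithful and triangulated. Composing with $\Ho(\overline{L})$ produces the desired triangulated functor $L(-):\Mix_k \too \Ho(\cM)$. The only non-formal ingredient in this argument is the universal property of $\Uadd$ itself, which is the substantive result proved in~\cite{Duke}; once this is invoked, the remaining reasoning is purely formal, and there is no real obstacle.
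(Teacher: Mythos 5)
Your proposal follows essentially the same route as the paper: invoke the universal property of the additive motivator from~\cite{Duke} to factor $L$ through $\Madd$, then restrict to the subcategory $\Mix_k$. One small technical imprecision worth flagging: in~\cite{Duke}, $\Madd$ is a Grothendieck \emph{derivator} (the "additive motivator"), not a stable model category, and the universal property of \cite[Thm.~15.4]{Duke} is stated for morphisms of derivators. The paper therefore first passes from the Morita model structure on $\dgcat_k$ to the derivator $\HO(\dgcat_k)$, checks that $L$ gives an additive invariant $\HO(\dgcat_k)\to\HO(\cM)$ in the sense of \cite[Notation~15.5]{Duke}, obtains a homotopy-colimit-preserving morphism of derivators $\Madd\to\HO(\cM)$, and then takes base categories to get the triangulated functor $\Madd(e)\to\Ho(\cM)$. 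Your reformulation in terms of model categories and $\Ho(\Madd)$ conveys the same idea, but a precise write-up should work at the derivator level and use the base triangulated category $\Madd(e)$ rather than "$\Ho(\Madd)$." The final restriction step, using that $\Mix_k$ is identified with a full thick triangulated subcategory of $\Madd(e)$, matches the paper exactly.
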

Consider the following compositions
\begin{equation}\label{eq:induced}
\xymatrix{
L_n(-): \Mix_k \ar[r]^-{L(-)} & \Ho(\cM)  \ar[rr]^-{\Hom({\bf 1}[n],-)} && \mathsf{Ab} \qquad n \in \bbZ\,,
}
\end{equation}
where ${\bf 1}$ stands for the $\otimes$-unit of $\cM$ and $\mathsf{Ab}$ for the category of abelian groups.
\begin{example}[Algebraic $K$-theory]
Recall from \cite[\S5.2]{ICM} that the (connective) algebraic $K$-theory functor $K(-): \dgcat_k \to \Sp$, with values in the category of spectra, satisfies the above conditions (i)-(iii) and hence is an additive invariant.
\end{example}

\begin{example}[Hochschild and cyclic homology]
Recall from \cite[\S5.3]{ICM} that the Hochschild and cyclic homology functors $HH(-), HC(-): \dgcat_k \to \cC(k)$, with values in the category of complexes of $k$-modules, are additive invariants. In these examples the associated functors $HH_n(-)$ and $HC_n(-)$ take values in the abelian category $k\text{-}\mathrm{Mod}$ of $k$-modules. 
\end{example}

\begin{example}[Negative cyclic homology]
Recall from \cite[Example~7.10]{CT1} that the mixed complex functor $C(-): \dgcat_k \to \cC(\Lambda)$, with values in the category of mixed complexes, satisfies the above conditions (i)-(iii) and hence is an additive invariant. Moreover, as explained in \cite[Example~8.10]{CT1}, the associated functors $C_n(-)$ agree with the negative cyclic homology functors $HC^{-}_n(-)$.
\end{example}

\begin{example}[Periodic cyclic homology]\label{ex:HP}
Periodic cyclic homology is {\em not} an additive invariant since its definition uses infinite products and these do not commute with filtered colimits. Nevertheless, it factors through $\Mix_k$ as follows: recall from \cite[Example~7.11]{CT1} that we have a $2$-perioditization functor $P(-): \cC(\Lambda) \to k[u]\text{-}\mathrm{Comod}$, with values in the category of comodules over the Hopf algebra $k[u]$. This functor preserves weak equivalences and hence by applying the above Proposition~\ref{prop:aux} to $C(-)$ we obtain the following composed triangulated functor
$$ \Mix_k \stackrel{C(-)}{\too} \Ho(\cC(\Lambda)) \stackrel{P(-)}{\too} \Ho(k[u]\text{-}\mathrm{Comod})\,.$$
As explained in \cite[Example~8.11]{CT1}, the associated functors $((P\circ C)(-))_n$ agree with the periodic cyclic homology functors $HP_n(-)$. 
\end{example}

\begin{example}[Topological Hochschild homology]
Recall from \cite{BM} (see also \cite[\S8]{AGT}) that the topological Hochschild homology functor $THH(-): \dgcat_k \to \Sp$ is also an example of an additive invariant.
\end{example}
Recall from \cite[Thm.~2.8]{Prods} and \cite[Prop.~2.5]{Criterion} the construction of the following natural transformations between additive invariants:
\begin{equation}\label{eq:Chern}
tr:K(-) \Rightarrow HH(-)  \quad ch^{2i}:K(-) \Rightarrow HC(-)[-2i] \quad ch^-:K(-) \Rightarrow C(-)\,.
\end{equation}
By first evaluating these natural transformations at a noncommutative mixed motive $M$, and then passing to the associated functors \eqref{eq:induced} we obtain, respectively, the Dennis trace maps, the higher Chern characters, and the negative Chern characters:
\begin{equation*}
tr_n: K_n(M) \to HH_n(M) \quad ch^{2i}_n:K_n(M) \to HC_{n+2i}(M) \quad ch^-_n: K_n(M) \to HC^-_n(M)\,.
\end{equation*}
\begin{theorem}\label{thm:main2}
Under the preceding notations the following holds:
\item[(i)] To every noncommutative mixed motive $M$ we can associate a cochain (weight) complex of noncommutative Chow motives
\begin{eqnarray*}
t(M): && \cdots \too M^{(i-1)} \too M^{(i)} \too M^{(i+1)} \too \cdots\,.
\end{eqnarray*}
Moreover, the assignment $M \mapsto t(M)$ gives rise to a conservative functor from $\Mix_k$ towards a certain weak category of complexes $K_{\mathfrak{m}}(\NChow_k)$; consult \cite[\S3.1]{Bondarko}.
\item[(ii)] Every additive invariant $L(-)$ yields a convergent (weight) spectral sequence
\begin{equation}\label{eq:spectral-eq}
E_1^{pq}(M) = L_{-q}(M^{(p)}) \Rightarrow L_{-p -q}(M)\,.
\end{equation}
Moreover, \eqref{eq:spectral-eq} is functorial on $M$ after the $E_1$-term.
\item[(iii)] The above natural transformations \eqref{eq:Chern} respect the spectral sequence \eqref{eq:spectral-eq}.
\end{theorem}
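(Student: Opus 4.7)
All three statements will be deduced from Bondarko's general machinery \cite{Bondarko}, applied to the weight structure $w$ on $\Mix_k$ furnished by Theorem~\ref{thm:main}. Because $w$ is bounded by item~(v) and non-degenerate by item~(vii), with heart naturally equivalent to $\NChow_k$ by item~(vi), the abstract theory of \cite[\S2--\S3]{Bondarko} applies directly.

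For part~(i), I would invoke Bondarko's weight complex construction \cite[\S3]{Bondarko}: iterating the weight decomposition triangles from item~(iv) one extracts, for each $M \in \Mix_k$, a cochain complex $t(M)$ whose terms $M^{(i)}$ lie in $\Mix_k^{w\geq 0}\cap \Mix_k^{w\leq 0}\simeq \NChow_k$. Although $t(M)$ is non-canonical at the chain level, the assignment is well-defined as a functor to the weak category of complexes $K_{\mathfrak{m}}(\NChow_k)$. Conservativity then follows from the combination of boundedness and non-degeneracy: Bondarko shows that $t(M)\simeq 0$ in $K_{\mathfrak{m}}(\NChow_k)$ forces $M \in \bigcap_{l} \Mix_k^{w\geq 0}[-l] \cap \bigcap_{l} \Mix_k^{w\leq 0}[-l]$, which by~(vii) reduces to $M = 0$.

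For part~(ii), I would combine Proposition~\ref{prop:aux} with Bondarko's spectral sequence construction. The additive invariant $L$ produces a triangulated functor $L:\Mix_k \to \Ho(\cM)$, and post-composing with $\Hom_{\Ho(\cM)}({\bf 1}[n], -)$ yields a cohomological functor $L_{-n}:\Mix_k \to \mathsf{Ab}$. Iterated weight decompositions assemble into a weight filtration of $M$; the associated exact couple in each $L_{-q}$ produces the spectral sequence \eqref{eq:spectral-eq}, see \cite[\S2.4]{Bondarko}. Convergence is automatic from boundedness, while functoriality on $M$ from the $E_1$-term onwards reflects precisely the weak-functoriality of $t(-)$. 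Part~(iii) is then a direct consequence: the natural transformations \eqref{eq:Chern} lift, via Proposition~\ref{prop:aux}, to natural transformations between the associated triangulated functors on $\Mix_k$, and hence between the corresponding cohomological functors $L_{-n}$; Bondarko's spectral sequence is manifestly functorial in the input cohomological functor, yielding the desired compatibility.

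The chief obstacle is bookkeeping rather than substance: one must check that our conventions on the weight decomposition triangle in item~(iv) match Bondarko's sign and direction conventions, and that the merely ``weak'' functoriality of $t(-)$ into $K_{\mathfrak{m}}(\NChow_k)$ is sufficient to support the natural transformations of spectral sequences claimed in part~(iii). Since everything is ultimately computed in $\mathsf{Ab}$ after composing with $\Hom_{\Ho(\cM)}({\bf 1}[n], -)$, the analysis reduces to classical spectral sequence manipulations within Bondarko's original framework, so no genuinely new argument should be needed beyond the careful translation of \cite[\S2--\S3]{Bondarko} into the present setting.
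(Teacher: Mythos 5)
Your proposal is correct and takes essentially the same route as the paper: both verify that the weight structure from Theorem~\ref{thm:main} is bounded and non-degenerate with heart $\NChow_k$ and then invoke Bondarko's weight-complex and weight-spectral-sequence theorems, using Proposition~\ref{prop:aux} together with post-composition by $\Hom_{\Ho(\cM)}({\bf 1}[n],-)$ to furnish the required functor to $\mathsf{Ab}$. The paper simply cites the precise statements \cite[Thms.~3.2.2~II, 3.3.1, 2.3.2~II--IV]{Bondarko}; the only slip in your write-up is calling the composed functor ``cohomological'' when it is covariant, i.e.\ \emph{homological} in Bondarko's terminology.
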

Intuitively speaking, item (i) of Theorem~\ref{thm:main2} shows us that all the information concerning a noncommutative mixed motive can be encoded into a cochain complex. Items (ii) and (iii) endow the realm of noncommutative motives with a new powerful computational tool which is moreover well-behaved with respect to the classical Chern characters. We intend to develop this computational aspect in future work.
\section{Grothendieck rings}
As explained in \cite{IAS,IMRN,CvsNC}, the categories $\Mix_k$ and $\NChow_k$ are endowed with a symmetric monoidal structure induced by the tensor product of dg categories. Hence, the Grothendieck group of $\Mix_k$ (considered as a triangulated category) and the Grothendick group of $\NChow_k$ (considered as an additive category) are endowed with a ring structure.
\begin{theorem}\label{thm:main3}
The equivalence of categories of item $\mathrm{(vi)}$ of Theorem~\ref{thm:main} gives rise to a ring isomorphism
$$K_0(\NChow_k) \stackrel{\sim}{\too} K_0(\Mix_k)\,.$$
\end{theorem}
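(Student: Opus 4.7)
The strategy is to apply the general principle, due to Bondarko, that a bounded weight structure on a triangulated category $\cT$ induces a natural isomorphism $K_0(\cH) \isoto K_0(\cT)$ between the Grothendieck group of its heart $\cH$ (viewed as an additive category) and the Grothendieck group of $\cT$ (viewed as a triangulated category). Since Theorem~\ref{thm:main} equips $\Mix_k$ with a bounded weight structure whose heart is equivalent to $\NChow_k$, the general principle already supplies a natural group isomorphism; the remaining content of Theorem~\ref{thm:main3} is then the compatibility with the multiplicative structure.

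More concretely, the plan is to exhibit mutually inverse maps explicitly. The embedding from item~(vi) yields a homomorphism $\phi: K_0(\NChow_k) \too K_0(\Mix_k)$: the defining relation $[A \oplus B] = [A] + [B]$ in $K_0(\NChow_k)$ lifts to the split distinguished triangle $A \too A \oplus B \too B \too A[1]$ in $\Mix_k$, so $\phi$ is well-defined. In the reverse direction, I would use the weight complex functor $t(-)$ of Theorem~\ref{thm:main2}(i) to set
$$\psi: K_0(\Mix_k) \too K_0(\NChow_k), \qquad [M] \longmapsto \sum_i (-1)^i [M^{(i)}].$$
The sum is finite by boundedness of $w$ (item~(v)). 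Since $t(M)$ is defined only up to weak homotopy in $K_{\mathfrak{m}}(\NChow_k)$, one verifies that the alternating sum of classes in $K_0(\NChow_k)$ is invariant under such homotopies and respects the distinguished-triangle relations; both statements are part of Bondarko's treatment of weight Euler characteristics.

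The identity $\psi \circ \phi = \id$ is immediate, because the weight complex of an object of the heart is concentrated in degree zero. The identity $\phi \circ \psi = \id$ follows by induction on the weight range of $M$, using the weight decomposition of item~(iv) to peel off a single weight slice at each step and reassemble $[M]$ as the alternating sum $\sum_i (-1)^i [M^{(i)}]$ in $K_0(\Mix_k)$. Finally, multiplicativity of $\phi$ is automatic: the symmetric monoidal structures on $\NChow_k$ and $\Mix_k$ are both induced by the tensor product of dg categories, so the embedding in item~(vi) is symmetric monoidal, and $\phi$ is therefore a ring homomorphism. I expect the main technical obstacle to lie in the verification that $\psi$ is well-defined, i.e.\ that the Euler characteristic of $t(M)$ is truly independent of the choices made in Bondarko's construction (both of representative weight complex and of weight decomposition of a given triangle). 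This is handled by Bondarko's axiomatic development in \cite[\S3, \S5]{Bondarko}.
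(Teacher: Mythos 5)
Your proposal is correct and takes essentially the same approach as the paper: the paper also reduces to Bondarko's general result that a bounded weight structure with symmetric monoidal heart induces a ring isomorphism of Grothendieck groups (citing Bondarko's Theorem 5.3.1 and Remark 5.3.2 directly rather than reconstructing the Euler-characteristic inverse $\psi$ by hand). The one step you gloss over is the verification that $\Phi$ is symmetric monoidal; the paper traces this through the fact that both $\cU_a: \dgcat_k \to \Madd(e)$ and $\cU_{\mathsf{A}}: \dgcat_k \to \Hmo_0$ are symmetric monoidal functors, so the induced functor on $\NChow_k$ inherits this structure.
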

Informally speaking, Theorem~\ref{thm:main3} shows us that ``up to extension'' the categories $\Mix_k$ and $\NChow_k$ have the same isomorphism classes.
\section{Proofs}\label{sec:proofs}

\subsection*{Proof of Theorem~\ref{thm:main}}
Recall from \cite{IAS} that a dg category $\cA$ is called {\em smooth} if it is perfect as a bimodule over itself and {\em proper} if for each ordered pair of objects $(x,y)$ in $\cA$, the complex of $k$-modules $\cA(x,y)$ is perfect. Recall also that Kontsevich's construction of $\Mix_k$ decomposes in three steps:
\begin{itemize}
\item[(1)] First, consider the category $\KPM_k$ (enriched over spectra) whose objects are the smooth and proper dg categories, whose morphisms from $\cA$ to $\cB$ are given by the (connective) algebraic $K$-theory spectrum $K(\cA^\op \otimes^\bbL \cB)$, and whose composition is induced by the (derived) tensor product of bimodules. 
\item[(2)] Then, take the formal triangulated envelope of $\KPM_k$. Objects in this new category are formal finite extensions of formal shifts of objects in $\KPM_k$. Let $\KTM_k$ be the associated homotopy category.
\item[(3)] Finally, pass to the pseudo-abelian envelope of $\KTM_k$. The resulting category $\Mix_k$ is what Kontsevich named the category of noncommutative mixed motives.
\end{itemize}
Recall from \cite[\S15]{Duke} the construction of the additive motivator of dg categories $\Madd$ and the associated base triangulated category $\Madd(e)$. The analogue of \cite[Prop.~8.5]{CT1} (with the above definition\footnote{In \cite[\S8.2]{CT1} we have considered non-connective algebraic $K$-theory since we were interested in the relation with To{\"e}n's secondary $K$-theory. However, Kontsevich's original definition is in terms of {\em connective} algebraic $K$-theory.} of $\Mix_k$ and $\Mloc(e)$ replaced by $\Madd(e)$) holds similarly. Hence, $\Mix_k$ can be identified with the smallest thick triangulated subcategory of $\Madd(e)$ spanned by the noncommutative motives of smooth and proper dg categories. Similarly, $\KTM_k$ can be identified with the smallest triangulated subcategory of $\Madd(e)$ spanned by the noncommutative motives of smooth and proper dg categories. In what follows, we will assume that these identifications have been made.

Now, recall from \cite{IMRN,CvsNC} that the category $\NChow_k$ of noncommutative Chow motives is defined as the pseudo-abelian envelope of the category whose objects are the smooth and proper dg categories, whose morphisms from $\cA$ to $\cB$ are given by the Grothendieck group $K_0(\cA^\op \otimes^\bbL \cB)$, and whose composition is induced by the (derived) tensor product of bimodules.

\begin{proposition}
There is a natural fully-faithful functor
\begin{equation}\label{eq:fully-faithful}
\Phi: \NChow_k \too \Mix_k\,.
\end{equation}
\end{proposition}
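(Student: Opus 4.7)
The plan is to exhibit $\Phi$ as the pseudo-abelian extension of a fully-faithful additive functor between the pre-pseudo-abelianized versions of $\NChow_k$ and $\Mix_k$. Concretely, let $\NChow_k^{\mathsf{pre}}$ denote the additive category with objects the smooth and proper dg categories and morphisms $K_0(\cA^{\op}\otimes^{\bbL}\cB)$; by definition $\NChow_k$ is its pseudo-abelian envelope. By step~(3) of Kontsevich's construction recalled above, $\Mix_k$ is likewise the pseudo-abelian envelope of $\KTM_k$.

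The core step is the Hom-group identification
$$\Hom_{\KTM_k}(\cA,\cB) \;\simeq\; \pi_0\, K(\cA^{\op}\otimes^{\bbL}\cB) \;=\; K_0(\cA^{\op}\otimes^{\bbL}\cB)$$
for smooth and proper $\cA,\cB$. The right-hand equality is tautological. For the left one, I would invoke the identification (already recorded in the excerpt) of $\KTM_k$ with the smallest triangulated subcategory of $\Madd(e)$ spanned by the smooth and proper dg categories, together with the analogue of \cite[Prop.~8.5]{CT1} in the additive motivator setting, which computes the mapping spectrum between such objects as the connective algebraic $K$-theory spectrum of the derived tensor product. Since composition on both sides is induced by the derived tensor product of bimodules, this identification assembles into a well-defined, fully-faithful additive functor $\NChow_k^{\mathsf{pre}}\to\KTM_k\hookrightarrow\Mix_k$.

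Since $\Mix_k$ is pseudo-abelian, the universal property of the pseudo-abelian envelope then produces a unique additive extension $\Phi:\NChow_k\to\Mix_k$. Full faithfulness is preserved by this extension: morphisms in $\NChow_k$ between two summands $(X,e)$ and $(Y,f)$ are given by $f\circ\Hom_{\NChow_k^{\mathsf{pre}}}(X,Y)\circ e$, and the analogous formula holds inside any pseudo-abelian target, so it is respected by $\Phi$. The only genuinely delicate step is thus the Hom-spectrum computation isolated above; this is where I expect the main obstacle to lie, but it reduces cleanly to the additive motivator machinery already invoked in the excerpt, namely the analogue of \cite[Prop.~8.5]{CT1} applied to smooth and proper dg categories.
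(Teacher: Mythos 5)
Your proposal and the paper's proof share the same skeleton: both reduce to the smooth-and-proper objects, establish the $K_0$ hom-group identification, and then pass to pseudo-abelian envelopes. The crucial computational input is the same too --- the mapping spectrum/$K_0$ identification between $\cU_a(\cA)$ and $\cU_a(\cB)$ inside $\Madd(e)$, which the paper cites as \cite[Prop.~16.1]{Duke} (together with \cite[Lemma~4.9]{CT1} to pass from $K_0\rep(\cA,\cB)$ to $K_0(\cA^{\op}\otimes^{\bbL}\cB)$), and which you invoke via the analogue of \cite[Prop.~8.5]{CT1} followed by $\pi_0$.

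Where you diverge is in how the functor itself gets \emph{constructed}. The paper first builds the additive category $\Hmo_0$ (all dg categories, morphisms $K_0\rep(\cA,\cB)$) and observes that $\cU_{\mathsf{A}}:\dgcat_k\to\Hmo_0$ is the universal additive invariant with values in an additive category. Since $\cU_a:\dgcat_k\to\Madd(e)$ is such an additive invariant, the universal property automatically supplies an additive functor $\Psi:\Hmo_0\to\Madd(e)$; the Hom computation is then used only to verify full faithfulness of the restriction to $\Hmo_0^{\mathsf{sp}}$. You instead try to read the functor directly off the Hom identification, asserting that it ``assembles into a well-defined additive functor'' because composition on both sides is induced by derived tensor product of bimodules. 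That assertion is morally right, but it is precisely the point the universal-property argument makes rigorous for free: one needs to know not merely that the Hom-sets match but that the identification is compatible with composition and identities, uniformly in $\cA,\cB$. In the paper's route this is never an issue because $\Psi$ exists \emph{before} the Hom groups are compared. So your argument is correct in spirit and relies on the same key computation, but the universal-property step via $\Hmo_0$ is doing genuine work that your write-up elides; without it, the ``assembles into a functor'' claim deserves a separate check.
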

\begin{proof}
Given dg categories $\cA$ and $\cB$, let $\cD(\cA^\op \otimes^\bbL \cB)$ be derived category of $\cA\text{-}\cB$-bimodules and $\rep(\cA,\cB)\subset \cD(\cA^\op \otimes^\bbL \cB)$ the full triangulated subcategory spanned by those $\cA\text{-}\cB$-bimodules $X$ such that for every object $x \in \cA$, the associated $\cB$-module $X(x,-)$ is perfect; consult \cite[\S4.2]{ICM} for further details. Recall from \cite[\S5]{IMRN} the construction of the additive category $\Hmo_0$: the objects are the dg categories, the morphisms from $\cA$ to $\cB$ are given by the Grothendieck group $K_0\rep(\cA,\cB)$ of the triangulated category $\rep(\cA,\cB)$, and the composition is induced by the (derived) tensor product of bimodules. There is a natural functor
\begin{equation}\label{eq:nat-functor}
 \cU_{\mathsf{A}}: \dgcat_k \too \Hmo_0
 \end{equation}
that is the identity on objects and which sends a dg functor $F: \cA\to \cB$ to the class of the corresponding $\cA\text{-}\cB$-bimodule. On the other hand, recall from \cite[\S15]{Duke} the construction of the functor
\begin{equation}\label{eq:functor}
\cU_a: \dgcat_k \too \Madd(e)\,.
\end{equation}
As proved in \cite[Thms.~4.6 and 6.3]{IMRN}, $\cU_{\mathsf{A}}$ is the universal functor with values in an additive category which inverts derived Morita equivalences and sends split exact sequences\footnote{This condition can equivalently be formulated in terms of a general semi-orthogonal decomposition in the sense of Bondal-Orlov; see \cite[Thm.~6.3(4)]{IMRN}.} to direct sums (see condition (iii) of Definition~\ref{def:additive}). Since these conditions are satisfied by the functor $\cU_a$ (see \cite[Thm.~15.4]{Duke}) and $\Madd(e)$ is an additive category (since it is triangulated) we obtain an induced additive functor $\Psi$ making the following diagram commute
$$
\xymatrix{
\dgcat_k \ar[d]_-{\cU_{\mathsf{A}}} \ar[dr]^-{\cU_a} & \\
\Hmo_0 \ar[r]_-\Psi & \Madd(e)\,.
}
$$ 
Let us denote by $\Hmo_0^{\mathsf{sp}} \subset \Hmo_0$ the full subcategory of smooth and proper dg categories. When $\cA$ (and $\cB$) is smooth and proper we have a natural isomorphism
$$ \Hom_{\Hmo_0^{\mathsf{sp}}}(\cA,\cB) := K_0\rep(\cA,\cB) \simeq K_0(\cA^\op \otimes^\bbL \cB)\,;$$
see \cite[Lemma~4.9]{CT1}. Hence, we observe that the category $\NChow_k$ of noncommutative Chow motives is the pseudo-abelian envelope of $\Hmo_0^{\mathsf{sp}}$. Since by construction the triangulated category $\Mix_k\subset \Madd(e)$ is idempotent complete, the composition $\Hmo_0^{\mathsf{sp}}\subset \Hmo_0 \stackrel{\Psi}{\too} \Madd(e)$ extends then to a well-defined additive functor
$$ \Phi: \NChow_k \too \Mix_k\subset \Madd(e)\,.$$
Finally, the fact that $\Phi$ is fully-faithful follows from the following computation
$$\Hom_{\Madd(e)}(\cU_a(\cA), \cU_a(\cB))\simeq K_0\rep(\cA,\cB)\simeq K_0(\cA^\op \otimes^\bbL \cB)$$
for every smooth and proper dg category $\cA$; see \cite[Prop.~16.1]{Duke}.
\end{proof}

Let us now verify the conditions of Bondarko's \cite[Thm.~4.3.2 II]{Bondarko} with $\underline{C}$ the triangulated category $\KTM_k\subset \Mix_k$ and $H$ the essential image of the composition $\Hmo_0^{\mathsf{sp}} \subset \NChow_k \to \Mix_k$. By construction, $H$ generates $\KTM_k$ in the sense of \cite[page~11]{Bondarko}. Moreover, given any two smooth and proper dg categories $\cA$ and $\cB$, we have the following computation:
\begin{equation}\label{eq:computation}
\Hom_{\Mix_k}(\Phi(\cA),\Phi(\cB)[-n]) \simeq \left\{ \begin{array}{ll} K_n(\cA^\op \otimes^\bbL \cB) &  n\geq0 \\  0 & n <0 \,. \end{array} \right.
\end{equation}
This follows from \cite[Prop.~16.1]{Duke} combined with the specific construction of $\Phi$. Hence, $H \subset \KTM_k$ is negative in the sense of \cite[Def.~4.3.1(1)]{Bondarko}. The conditions of \cite[Thm.~4.3.2 II]{Bondarko} are then satisfied and so we conclude that there exists a unique bounded weight structure $w$ on $\KTM_k$ whose heart is the pseudo-abelian envelope of $H$. Note that the heart is then equivalent to $\NChow_k$ under the above fully-faithful functor \eqref{eq:fully-faithful}. Since the weight structure $w$ is bounded, \cite[Prop.~5.2.2]{Bondarko} implies that $w$ can be extended from $\KTM_k$ to $\Mix_k$. The heart remains exactly the same since the category $\NChow_k$ is by construction idempotent complete. By \cite[Defs.~1.1.1 and 1.2.1]{Bondarko} we then conclude that conditions (i)-(vi) of Theorem~\ref{thm:main} are verified, where $\Mix_k^{w\geq 0}$ (resp. $\Mix_k^{\leq 0}$) is the smallest idempotent complete and extension-stable subcategory of $\Mix_k$ (see \cite[Def.~1.3.1]{Bondarko}) containing the objects $\Phi(\NChow_k)[n], n \leq 0$ (resp. $\Phi(\NChow_k)[n], n \geq 0$). It remains then to verify condition (vii). We start by showing the equality $\cap_{l\in \bbZ} \Mix_k^{w \geq 0}[-l] =\{0\}\,.$ 
\begin{proposition}\label{prop:aux1}
For every noncommutative mixed motive $M$ there exists an integer $j \in \bbZ$ (which depends on $M$) such that for every $N \in \Phi(\NChow_k)$ we have 
\begin{eqnarray}\label{eq:equality-key}
\Hom_{\Mix_k}(N,M[i])=0 & \text{when} & i >j\,.
\end{eqnarray}
\end{proposition}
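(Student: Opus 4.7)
\emph{Proof proposal.} The plan is to establish the vanishing by dévissage along the construction of $\KTM_k$ as the triangulated hull of $\Phi(\Hmo_0^{\mathsf{sp}})$ inside $\Madd(e)$, feeding in the key computation~\eqref{eq:computation} at the base case.

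First I would carry out two trivial reductions. Since $\Mix_k$ is by construction the idempotent completion of $\KTM_k$, any $M \in \Mix_k$ is a direct summand of some $\widetilde{M} \in \KTM_k$, so $\Hom_{\Mix_k}(N,M[i])$ is a direct summand of $\Hom_{\Mix_k}(N,\widetilde{M}[i])$ and it is enough to treat $M \in \KTM_k$. Similarly, since $\NChow_k$ is the pseudo-abelian envelope of $\Hmo_0^{\mathsf{sp}}$, every $N \in \Phi(\NChow_k)$ is a direct summand of some $\Phi(\cB)$ with $\cB$ smooth and proper; it therefore suffices to produce an integer $j$, depending only on $M$, such that $\Hom_{\Mix_k}(\Phi(\cB),M[i]) = 0$ for every smooth and proper $\cB$ and every $i>j$.

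I would then induct on the presentation of $M$ as a finite iterated extension of formal shifts $\Phi(\cA)[n]$ in $\KTM_k$. For the base case $M = \Phi(\cA)[n]$, formula~\eqref{eq:computation} yields
$$\Hom_{\Mix_k}(\Phi(\cB),\Phi(\cA)[n+i]) = 0 \qquad \text{whenever } n+i>0,$$
so $j:=-n$ works uniformly in $\cB$. For the inductive step, suppose $M$ sits in a distinguished triangle $M_1 \to M_2 \to M \to M_1[1]$ with bounds $j_1,j_2$ already produced for $M_1$ and $M_2$. Applying $\Hom_{\Mix_k}(\Phi(\cB),-)$ gives the long exact sequence
$$\Hom_{\Mix_k}(\Phi(\cB),M_2[i]) \too \Hom_{\Mix_k}(\Phi(\cB),M[i]) \too \Hom_{\Mix_k}(\Phi(\cB),M_1[i+1]),$$
whose outer terms both vanish — and hence so does the middle term — as soon as $i > j := \max(j_2,\,j_1-1)$.

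There is no serious obstacle here: the non-trivial input is exactly the negativity of the class $\Phi(\Hmo_0^{\mathsf{sp}})$ recorded in~\eqref{eq:computation}, which is already in hand. The only point to watch is that $j$ must not depend on $\cB$; this is manifest in the base case (where $j=-n$ is determined by the shift alone) and is evidently preserved by the inductive step.
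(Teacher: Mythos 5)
Your argument is correct and is essentially the paper's own proof: the paper abstracts the same dévissage into a ``vanishing condition'' that it checks is stable under shift, extension, and passage to direct summands, then invokes that $\Mix_k$ is the smallest thick subcategory of $\Madd(e)$ containing $\Phi(\NChow_k)$, with \eqref{eq:computation} supplying the base case exactly as you use it. The only cosmetic difference is that you strip off idempotent completion in a single reduction to $\KTM_k$ at the start, whereas the paper treats it as one of its three closure operations; the inductive/extension step and the source of the bound $j$ are identical.
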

\begin{proof}
Let $\cC$ be a full subcategory of $\Madd(e)$ containing the zero object. Let us denote by $\cC[\bbZ]$ the category $\cup_{n \in \bbZ}
 \cC[n]$, by $\cC^\natural$ the idempotent completion of $\cC$ inside $\Madd(e)$, and by $\mathrm{Ext}(\cC)$ the subcategory of $\Madd(e)$ formed by the objects $\cO$ for which there exists a distinguished triangle
 \begin{equation}\label{eq:triangle}
 M_1 \too \cO \too M_2 \too M_1[1]
 \end{equation}
 with $M_1$ and $M_2$ in $\cC$. Note that $\cC \subseteq \mathrm{Ext}(\cC)$. Consider the following
 
 \medbreak

{\it Vanishing Condition:} there exists an integer $j \in \bbZ$ such that for every object $N \in \Phi(\NChow_k)$ we have
\begin{eqnarray*}
\Hom_{\Madd(e)}(N, \cO[i])=0 & \text{when} & i >j\,.
\end{eqnarray*}
\medbreak 

We now show that if by hypothesis the above vanishing condition holds for every object $\cO$ of $\cC$, then it holds also for every object of the following categories:

\begin{itemize}
\item[(1)] The category $\cC[\bbZ]$: this is clear since every object in $\cC[\bbZ]$ is of the form $\cO[n]$, with $n$ and integer and $\cO \in \cC$;
\item[(2)] The category $\mathrm{Ext}(\cC)$: by construction every object $\cO$ of $\mathrm{Ext}(\cC)$ fits in the above distinguished triangle \eqref{eq:triangle}. Let $j_1$ and $j_2$ be the integers of the vanishing condition which are associated to $M_1$ and $M_2$, respectively. Then, by choosing $j:=\mathrm{max}\{j_1, j_2\}$ we observe that the object $\cO$ also verifies the above vanishing condition;
\item[(3)] The category $\cC^\natural$: this is clear since every object in $\cC^\natural$ is a direct summand of an object in $\cC$; recall that $\Madd(e)$ admits arbitrary sums and so every idempotent splits.
\end{itemize}
Let us now apply the above general arguments to the category $\cC=\Phi(\NChow_k)$. By computation \eqref{eq:computation} the above vanishing condition holds for every object (with $j=0$). Recall that $\Mix_k$ is the smallest thick triangulated subcategory of $\Madd(e)$ spanned by the objects $N \in \Phi(\NChow_k)$. Hence, every object $M \in \Mix_k$ belongs to the category obtained from $\Phi(\NChow_k)$ by applying the above constructions (1)-(3) a {\em finite} number of times (the number of times depends on $M$). As a consequence, we conclude that $M$ satisfies the above vanishing condition and so the proof is finished.
\end{proof}
Let $M \in \cap_{l \in \bbZ} \Mix_k^{w \geq 0}[-l]$. Note that equality \eqref{eq:equality-key} can be re-written as
\begin{eqnarray}\label{eq:equality-key1}
\Hom_{\Mix_k}(N[-i],M)=0 & \text{when} & i >j\,.
\end{eqnarray} 
Since $\Mix_k^{w \geq 0}$ is the smallest idempotent complete and extension stable subcategory of $\Mix_k$ containing the objects $\Phi(\NChow_k)[n], n \leq 0$, we conclude from \eqref{eq:equality-key1} that $\Hom_{\Mix_k}(\cO,M)=0$ for every object $\cO$ belonging to $\Mix_k^{w \geq 0}[-l]$ with $l >j$. Since by hypothesis $M \in \Mix_k^{w \geq 0}[-l]$ we then conclude by the Yoneda lemma that $M=0$ in $\Mix_k^{w \geq 0}[-l]$ (with $l >j$) and hence in $\cap_{l \in \bbZ}\Mix_k^{w \geq 0}[-l]$. Let us now prove the equality $\cap_{l \in \bbZ} \Mix_k^{w\leq 0} [-l] =\{0\}$.
\begin{proposition}\label{prop:aux2}
For every non-trivial noncommutative mixed motive $M$ there exists an integer $j \in \bbZ$, an object $N \in \Phi(\NChow_k)$, and a non-trivial morphism $f : N[j] \to M$.
\end{proposition}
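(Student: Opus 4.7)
The plan is to argue by contraposition, exploiting the identification (already recorded earlier in the proof of Theorem~\ref{thm:main}) of $\Mix_k$ with the smallest thick triangulated subcategory of $\Madd(e)$ containing $\Phi(\NChow_k)$. This identification is legitimate because $\NChow_k$ is by construction the pseudo-abelian envelope of the smooth and proper dg categories, and $\Mix_k$ is itself thick and idempotent complete, so every object of $\Phi(\NChow_k)$ already sits inside it.

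Concretely, I would introduce the full subcategory
$$\cC := \{X \in \Mix_k \mid \Hom_{\Mix_k}(X[j],M) = 0 \text{ for every } j \in \bbZ\}$$
and verify that it is a thick triangulated subcategory of $\Mix_k$. Closure under shifts is immediate from the definition. Closure under extensions follows from the long exact sequence obtained by applying the cohomological functor $\Hom_{\Mix_k}(-,M)$ to any distinguished triangle in $\Mix_k$. Closure under direct summands follows from the additivity of $\Hom_{\Mix_k}(-,M)$ together with the idempotent completeness of $\Mix_k$ (so that summands actually exist in $\Mix_k$).

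Now suppose, for contradiction, that there is no choice of $N \in \Phi(\NChow_k)$ and $j \in \bbZ$ yielding a non-trivial morphism $N[j] \to M$. Then every shift of every object of $\Phi(\NChow_k)$ belongs to $\cC$, so in particular $\Phi(\NChow_k) \subseteq \cC$. Combined with the thickness of $\cC$ and the generator identification above, this forces $\cC = \Mix_k$. In particular $M \in \cC$, so taking $j=0$ we obtain $\Hom_{\Mix_k}(M,M) = 0$, hence $\id_M = 0$ and $M = 0$, contradicting the hypothesis that $M$ is non-trivial. Any nonzero element of the resulting non-vanishing $\Hom_{\Mix_k}(N[j],M)$ then supplies the desired morphism $f$.

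I do not anticipate a serious obstacle: this is essentially a textbook "thick generators detect zero objects" argument. The only minor point that deserves explicit verification is closure of $\cC$ under direct summands, but this rests solely on the additivity of $\Hom$ and the idempotent completeness of $\Mix_k$, both of which are already part of the ambient setup.
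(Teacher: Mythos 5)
Your proposal is correct and follows essentially the same route as the paper: both arguments observe that the class of objects $X$ with $\Hom(X[n],M)=0$ for all $n$ is closed under shifts, extensions, and direct summands, hence is a thick triangulated subcategory, and then use the fact that $\Mix_k$ is the smallest such subcategory containing $\Phi(\NChow_k)$ to force $M\in\cC$ and conclude $\id_M=0$. The only cosmetic difference is that the paper carries out the argument inside $\Madd(e)$ and phrases it as a contrapositive rather than a contradiction, which changes nothing of substance.
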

\begin{proof}
We prove the following equivalent statement: if $\Hom_{\Mix_k}(N[n],M)=0$ for every integer $n \in \bbZ$ and object $N \in \Phi(\NChow_k)$, then $M=0$. Recall that $\Mix_k$ is the smallest thick triangulated subcategory of $\Madd(e)$ spanned by the objects $N \in \Phi(\NChow_k)$. The class of objects $\cO$ in $\Madd(e)$ satisfying the equalities
\begin{eqnarray*}
\Hom_{\Madd(e)}(\cO[n],M)=0 && n \in \bbZ
\end{eqnarray*}
is clearly stable under extensions and direct factors. Since by hypothesis it contains the objects $N \in \Phi(\NChow_k)$ it contains also all the objects of the category $\Mix_k$. Hence, by taking $\cO=M$ and $n=0$, the identity morphism of $M$ allows us to conclude that $M=0$. 
\end{proof}
Let $M \in \cap_{l \in \bbZ} \Mix_k^{w \leq 0}[-l]$. If by hypothesis $M$ is non-trivial, then the morphism $f$ of Proposition~\ref{prop:aux2} gives rise to to a non-trivial morphism
\begin{equation}\label{eq:non-trivial}
0 \neq f[-j]:N \too M[-j]\,.
\end{equation}
Since by construction $N$ belongs to $\Mix_k^{w \geq 0}$, condition (iii) of Theorem~\ref{thm:main} combined with the non-trivial morphism \eqref{eq:non-trivial} implies that $M[-j] \notin \Mix_k^{w \leq 0}[1]$. Hence, $M \notin \Mix_k^{w \leq 0}[1+j]$ and so we obtain a contradiction with our hypothesis. This allows us to conclude that $M=0$ and so the proof of Theorem~\ref{thm:main} is finished.
\subsection*{Proof of Proposition~\ref{prop:aux}}
The category $\dgcat_k$ carries a (cofibrantly generated) Quillen model structure whose weak equivalences are precisely the derived Morita equivalences; see \cite[Thm.~5.3]{IMRN}. Hence, it gives rise to a well-defined Grothendieck derivator $\HO(\dgcat_k)$; consult \cite[Appendix~A]{CT1} for the notion of Grothendieck derivator. Since by hypothesis $\cM$ is stable and $L(-)$ satisfies conditions (i)-(iii) of Definition~\ref{def:additive}, we obtain then a well-defined additive invariant of dg categories $\HO(\dgcat_k) \to \HO(\cM)$ in the sense of \cite[Notation~15.5]{Duke}. By the universal property of \cite[Thm.~15.4]{Duke} this additive invariant factors through $\Madd$ giving rise to a homotopy colimit preserving morphism of derivators $\Madd \to \HO(\cM)$ and hence to a triangulated functor $\Madd(e) \to \Ho(\cM)$ on the underlying base categories. As explained in the proof of Theorem~\ref{thm:main}, the category $\Mix_k$ can be identified with a full triangulated subcategory of $\Madd(e)$. The composition obtained
$$ L(-): \Mix_k \subset \Madd(e) \too \Ho(\cM)$$
is then the triangulated functor mentioned in Proposition~\ref{prop:aux}. 

\subsection*{Proof of Theorem~\ref{thm:main2}}
As explained in the proof of Theorem~\ref{thm:main}, the category $\Mix_k$ is endowed with a non-degenerate bounded weight structure $w$ whose heart is equivalent to the category $\NChow_k$ of noncommutative Chow motives. In particular we have the following equalities
$$ \Mix_k^+ =\Mix_k = \Mix_k^{-}\,;$$
see \cite[Def.~1.3.5]{Bondarko}. Hence, item (i) follows from the combination of \cite[Thm.~3.2.2 II]{Bondarko} with \cite[Thm.~3.3.1]{Bondarko}. By Proposition~\ref{prop:aux} every additive invariant $L(-)$ gives rise to a triangulated functor $L(-): \Mix_k \to \Ho(\cM)$ and hence to a composed functor
\begin{equation}\label{eq:key-functor}
\xymatrix{
 \Mix_k \ar[r]^-{L(-)} & \Ho(\cM) \ar[rr]^-{\Hom({\bf 1},-)}&& \mathsf{Ab}\,.
}
\end{equation}
Note that \eqref{eq:key-functor} is {\em homological}, \ie it sends distinguished triangles to long exact sequences, and that we have the following identifications:
\begin{equation}\label{eq:identification}
\Hom({\bf 1}, L(M[-i])) \simeq \Hom({\bf 1}, L(M)[-i]) \simeq \Hom({\bf 1}[i],L(M)) =L_i(M) \,.
\end{equation}
Since the weight structure $w$ is bounded, we have $\Mix_k = \Mix_k^b$; see \cite[Def.~1.3.5]{Bondarko}. Hence, item (ii) follows from \cite[Thm.~2.3.2 II and IV]{Bondarko} (with $H=\eqref{eq:key-functor}$) and from the above identifications \eqref{eq:identification}. Finally, item (iii) follows from \cite[Thm.~2.3.2 III]{Bondarko} since all the Chern characters \eqref{eq:Chern} are natural transformations of additive invariants.

\begin{remark}
As the above proof clearly shows, Theorem~\ref{thm:main2}(ii) applies also to periodic cyclic homology; see Example~\ref{ex:HP}.
\end{remark}
\subsection*{Proof of Theorem~\ref{thm:main3}}
As explained in \cite[Thm.~7.5]{CT1} the functor \eqref{eq:functor} is symmetric monoidal. Since \eqref{eq:nat-functor} is also symmetric monoidal we conclude from the construction of \eqref{eq:fully-faithful} that this latter functor is also symmetric monoidal. Recall from the proof of Theorem~\ref{thm:main} that the category $\Mix_k$ is endowed with a bounded weight structure $w$. Since the functor \eqref{eq:nat-functor} is symmetric monoidal the heart $\Phi(\NChow_k)$ of $w$ is then a full additive symmetric monoidal subcategory of $\Mix_k$. Hence, the result follows from the combination of \cite[Thm.~5.3.1]{Bondarko} with \cite[Remark~5.3.2]{Bondarko}.

\end{document}